\numberwithin{equation}{section}
\newtheorem{theorem}{Theorem}[section]
\newtheorem{proposition}[theorem]{Proposition}
\newtheorem{corollary}[theorem]{Corollary}
\newtheorem{lemma}[theorem]{Lemma}
\newtheorem{claim}[theorem]{Claim}
\numberwithin{equation}{section}
\begin{document}
\title[Ergodic behaviours of composition operators]{Ergodic behaviors of composition operators acting on space of bounded holomorphic functions}
\author{Hamzeh Keshavarzi, Karim Hedayatian}
\maketitle
\begin{abstract}
We completely characterize the mean ergodic composition operators on $H^\infty(\mathbb{B}_n)$. In particular,  we show that a composition operator acting on this space is mean ergodic if and only if it is uniformly mean ergodic.
\\
\textbf{MSC (2010):} primary: 47B33, secondary:  32Axx; 47A35.\\
\textbf{Keywords:} composition operators, mean ergodic operators, space of bounded holomorphic functions.
\end{abstract}

\section{Introduction and main results}

The purpose of this paper is to prove the following theorem:

 \begin{theorem}
Let $\varphi$ be a holomorphic self-map of $\mathbb{B}_n$. Then, the following statements are equivalent.
\begin{itemize}
\item[(i)]  $C_\varphi$ is mean ergodic on $H^\infty(\mathbb{B}_n)$.
\item[(ii)]  $C_\varphi$ is uniformly mean ergodic on $H^\infty(\mathbb{B}_n)$.
\item[(iii)] $\varphi$ has a fixed point in $\mathbb{B}_n$ and there is a $k\in \mathbb{N}$ such that $\| \varphi_{kj}-\rho_\varphi \|_\infty \rightarrow 0$ as $j\rightarrow \infty$.
\end{itemize}
\end{theorem}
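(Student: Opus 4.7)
The plan is to execute the cycle (ii)$\Rightarrow$(i)$\Rightarrow$(iii)$\Rightarrow$(ii), of which (ii)$\Rightarrow$(i) is immediate from the definitions. Throughout, $C_\varphi$ is a contraction on $H^\infty(\mathbb{B}_n)$ and hence power-bounded. For (i)$\Rightarrow$(ii) I will invoke an abstract ergodic-theoretic principle: $H^\infty(\mathbb{B}_n)$ is a Grothendieck space with the Dunford--Pettis property (due to Bourgain in the case of the disc, with an argument that extends to $\mathbb{B}_n$), and by a theorem of Lotz every mean ergodic power-bounded operator on such a space is automatically uniformly mean ergodic. The genuine dynamical content of the theorem therefore lies in the equivalence of (i)--(ii) with (iii).

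For (iii)$\Rightarrow$(ii), set $P:=C_{\rho_\varphi}$ and $S:=C_\varphi^k=C_{\varphi_k}$. Passing to the uniform limit in $\varphi_{2kj}=\varphi_{kj}\circ\varphi_{kj}$ and $\varphi_{k(j+1)}=\varphi_k\circ\varphi_{kj}=\varphi_{kj}\circ\varphi_k$ yields $\rho_\varphi\circ\rho_\varphi=\rho_\varphi$ together with $\varphi_k\circ\rho_\varphi=\rho_\varphi\circ\varphi_k=\rho_\varphi$; since $\rho_\varphi$ fixes the interior fixed point $p$ of $\varphi$, the maximum principle places $\rho_\varphi(\mathbb{B}_n)\subset\mathbb{B}_n$, and $P$ is a bounded projection with $SP=PS=P$. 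The analytic heart of this direction is the promotion of the Euclidean uniform convergence $\|\varphi_{kj}-\rho_\varphi\|_\infty\to 0$ to operator-norm convergence $\|C_{\varphi_{kj}}-P\|\to 0$; I plan to use the invariant Schwarz--Pick inequality on $\mathbb{B}_n$ (every $f\in H^\infty(\mathbb{B}_n)$ with $\|f\|_\infty\le 1$ is $1$-Lipschitz with respect to the Kobayashi distance) together with the contraction of the Kobayashi distance by the iterates $\varphi_{kj}$ (which all fix $p$) to convert Euclidean closeness of symbols into operator-norm closeness. Once $S^j\to P$ in operator norm, a Cesàro--Stolz argument gives $\frac{1}{m}\sum_{j=0}^{m-1}S^j\to P$ in norm, and decomposing the Cesàro means of $C_\varphi$ into residue classes modulo $k$ yields (ii).

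The substantive work is (i)$\Rightarrow$(iii). If $\varphi$ had no interior fixed point, then by the Denjoy--Wolff theorem for $\mathbb{B}_n$ the iterates $\varphi_j$ would converge uniformly on compact subsets to a boundary point $\zeta$, and a peak-type test function $f\in A(\mathbb{B}_n)$ at $\zeta$ would make the Cesàro averages of $C_\varphi f$ fail to be Cauchy in the sup norm, contradicting (i); so $\varphi$ has an interior fixed point $p$. The iterates $\{\varphi_j\}$ are then a normal family on compact subsets, and by the classical iteration theory (Abate) the compact-open limit set of $\{\varphi_j\}$ is a compact commutative topological group $\Gamma$ acting on a holomorphic retract through $p$. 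Thanks to (ii) (already established), the operator-norm Cesàro limit $P$ of $C_\varphi$ is well defined, and the components of $\rho_\varphi$ are recovered as $P(z_1),\ldots,P(z_n)$. The crux is to show that $\Gamma$ is \emph{finite}, of order $k$ say; then $C_\varphi^{kj}$ converges in operator norm to $P$, and reading off the images of the coordinate functions gives $\|\varphi_{kj}-\rho_\varphi\|_\infty\to 0$. The main obstacle will be to rule out a positive-dimensional limit group $\Gamma$, i.e., to forbid eigenvalues of $D\varphi(p)$ on the unit circle other than roots of unity; the plan is to exploit continued-fraction resonances $\lambda^{n_m}\approx 1$ to exhibit an $f\in H^\infty(\mathbb{B}_n)$ whose Cesàro averages fail to converge in the sup norm, thereby forcing $\Gamma$ to be finite.
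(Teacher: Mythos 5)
Your proposal has two genuine gaps, either of which is fatal on its own. First, your route for (i)$\Rightarrow$(ii) rests on the assertion that $H^\infty(\mathbb{B}_n)$ is a Grothendieck space with the Dunford--Pettis property ``by an argument that extends to $\mathbb{B}_n$''. That is not available: Bourgain's theorem is for $H^\infty(\mathbb{D})$, and whether $H^\infty(\mathbb{B}_n)$ is a GDP space for $n\ge 2$ is treated as unknown (the paper states explicitly that ``we do not know whether $H^\infty(\mathbb{B}_n)$ is a GDP space or not''); the whole point of the theorem is to obtain the equivalence of (i) and (ii) without this. The paper instead proves (i)$\Rightarrow$(iii) and (iii)$\Rightarrow$(ii) directly, so that uniform mean ergodicity comes from the dynamical condition rather than from soft Banach-space theory; Lotz's theorem is invoked only on $H^\infty(\mathbb{D})$, in the fixed-point-free case, after restricting to the slice $z\mapsto(z,0,\dots,0)$.

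Second, and more seriously, your plan for (i)$\Rightarrow$(iii) proves the wrong statement. You reduce (iii) to finiteness of the limit group $\Gamma$ of $\{\varphi_j\}$, but finiteness of $\Gamma$ only yields $\varphi_{kj}\to\rho_\varphi$ uniformly on \emph{compact} subsets of $\mathbb{B}_n$, whereas (iii) demands $\|\varphi_{kj}-\rho_\varphi\|_\infty\to 0$, i.e.\ uniform convergence on the whole ball. This is strictly stronger and can fail even when $\Gamma$ is trivial: already for $n=1$ there are maps with $\varphi(0)=0$ and $\varphi_j\to 0$ locally uniformly but $\|\varphi_j\|_\infty\not\to 0$, and for these $C_\varphi$ is not mean ergodic. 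The actual content of (i)$\Rightarrow$(iii) in the paper is the construction, assuming $\|\varphi_{kj}-\rho_\varphi\|_\infty\ge\varepsilon$ for all $j$, of points $a_j$ whose orbits approach the boundary fast enough (via the Julia-type Lemma \ref{l6}) to support an interpolating-type family, and of an $f\in H^\infty(\mathbb{B}_n)$ vanishing on the retract but equal to $|\varphi_{2km_j}(a_j)-\rho_\varphi(a_j)|^2\ge\varepsilon^2$ along long initial segments of each orbit; this forces $M_j(C_\varphi)f$ to stay at distance $\ge\varepsilon^2$ from the candidate limit in sup norm, contradicting (i). Your sketch has no counterpart to this step. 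A smaller but real issue in (iii)$\Rightarrow$(ii): passing from Euclidean closeness $\|\varphi_{kj}-\rho_\varphi\|_\infty\to0$ to Kobayashi closeness $\sup_{z}\beta(\varphi_{kj}(z),\rho_\varphi(z))\to0$ is delicate near $\partial\mathbb{B}_n$, where small Euclidean distance does not control the Bergman distance; the paper needs Theorem \ref{mt1}, which shows that after a linear change of variables $\varphi_{kj}$ and $\rho_\varphi$ agree in the last $n-s$ coordinates, so that only the first $s$ coordinates, which tend to $0$ uniformly, contribute. Your Schwarz--Pick step silently assumes this is automatic.
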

Where $\rho_\varphi$ is the holomorphic retraction associated with $\varphi$ and is defined below.  We prove this theorem in two parts: Theorems \ref{mt2} and \ref{mt3}. Moreover, Theorem \ref{mt1} plays a key role in our method. However, we believe that Theorem \ref{mt1} has an independent interest.

Throughout the paper, $n$ is a fixed positive integer. Here is some notations:
\begin{itemize}
\item $\mathbb{C}$: the complex plane.
\item $\mathbb{B}_n=\{z\in \mathbb{C}^n: \ |z|<1 \}$: the unit ball of $\mathbb{C}^n$.
\item $\mathbb{D}=\mathbb{B}_1$: the unit disk in $\mathbb{C}$.
\item $H(\mathbb{B}_n)$: the space of all holomorphic functions from $\mathbb{B}_n$ into $\mathbb{C}$
 \item $H^\infty(\mathbb{B}_n)$: the subspace of all bounded functions in $H(\mathbb{B}_n)$.
\item $Hol(\mathbb{B}_n, \mathbb{B}_n)$: the set of all holomorphic self-maps of $\mathbb{B}_n$
\end{itemize}

Consider $\varphi\in Hol(\mathbb{B}_n,\mathbb{B}_n)$. The iterates of $\varphi$ are the functions $\varphi_k:=\varphi\circ \stackrel{(k)}{...} \circ\varphi$. We denote by $\varphi^i$, $1\leq i\leq n$ the components of $\varphi$, that is, $\varphi=(\varphi^1,...\varphi^n)$ where $\varphi^i:\mathbb{B}_n\rightarrow \mathbb{C}$ are holomorphic functions. Moreover, the composition operator $C_\varphi$ on $H(\mathbb{B}_n)$ is defined as $C_\varphi f=f\circ \varphi$. 

When we say that $\rho\in Hol(\mathbb{B}_n,\mathbb{B}_n)$  is holomorphic retraction, it means that it is an idempotent, that is, $\rho_2 = \rho$. Clearly, if $\varphi :\mathbb{B}_n \rightarrow \mathbb{B}_n$ be holomorphic such that the sequence of its iterates converges to a holomorphic
function $h:\mathbb{B}_n \rightarrow \mathbb{B}_n$. Then, $h_2 = h$, that is, $h$ is a holomorphic retraction of $\mathbb{B}_n$. For more details about the holomorphic self-maps of the unit ball and their iterates see \cite[Chapter 2]{abate}.

Let $\varphi :\mathbb{B}_n \rightarrow \mathbb{B}_n$ be holomorphic and have an interior fixed point. Then, from \cite[Theorem 2.1.29 and Proposition 2.2.30]{abate}, there exist a unique submanifold $M_\varphi$
of $\mathbb{B}_n$ and a unique holomorphic retraction $\rho_\varphi:\mathbb{B}_n \rightarrow M_\varphi$ such that every limit point $h \in Hol(\mathbb{B}_n,\mathbb{B}_n)$
of $\{\varphi_j\}$ is of the form $h = \gamma \circ \rho_\varphi$,
where $\gamma$ is an automorphism of $M_\varphi$. Moreover, even $\rho_\varphi$ is a limit point of the sequence $\{\varphi_j\}$. This implies that $\rho_\varphi\circ \varphi=\varphi\circ \rho_\varphi$.

Let $\{e_1,...,e_n\}$ be the standard basis of $\mathbb{C}^n$.

\begin{theorem} \label{mt1}
Let $\varphi$ be a holomorphic self-map of the unit ball with converging iterates and $\varphi(0)=0$. Then, there is an invertible matrix $V$ so that:
$$V^{-1}\varphi_j V=\Big((V^{-1}\varphi_j V)^1,...,(V^{-1}\varphi_j V)^s\Big)\oplus P_{n-s},$$
where $\dim M_\varphi=n-s$, the functions $(V^{-1}\varphi_j V)^1$,...,$(V^{-1}\varphi_j V)^s$ are the components of $V^{-1}\varphi_j V$, and $P_{n-s}$ is the orthogonal projection from $\mathbb{C}^n$ onto $U=e_{s+1}\oplus ... \oplus e_n$. Moreover, $V^{-1}\varphi_j V$ coverges to $P_{n-s}$ uniformly on the compact subsets of $V^{-1}\mathbb{B}_n$.
\end{theorem}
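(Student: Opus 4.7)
The plan is to construct a linear change of coordinates $V$ which uncouples the contracting part of $d\varphi(0)$ from the neutral part and, more importantly, turns the retraction $\rho_\varphi$ into the standard orthogonal projection $P_{n-s}$; the claimed block decomposition of every $\varphi_j$ then follows by conjugating the identity $\rho_\varphi\circ\varphi=\rho_\varphi$. Set $h:=\lim_{j\to\infty}\varphi_j=\rho_\varphi$, $M:=M_\varphi$, $A:=d\varphi(0)$ and $P:=dh(0)$.

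First I would analyse the linear algebra at $0$. Differentiating the commutation $\rho_\varphi\circ\varphi=\varphi\circ\rho_\varphi$ recorded in the excerpt gives $AP=PA$, so $A$ preserves the splitting $\mathbb{C}^n=\mathrm{range}(P)\oplus\ker(P)$. Since $A^j=d\varphi_j(0)\to P$ and $P^2=P$ has rank $n-s$, a standard spectral argument forces $A|_{\mathrm{range}(P)}=I_{n-s}$ and shows that $A|_{\ker(P)}$ has spectral radius strictly less than $1$. The Schwarz--Pick inequality at $0$ gives $\|A\|_{\mathrm{op}}\leq 1$, hence $\|P\|_{\mathrm{op}}\leq 1$; any idempotent on a finite dimensional Hilbert space of norm at most one is an orthogonal projection, so $\ker(P)=\mathrm{range}(P)^\perp$. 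Writing $L:=\mathrm{range}(P)$, I would choose a unitary $V$ with $V(U)=L$ and $V(\mathrm{span}(e_1,\ldots,e_s))=L^\perp$; then $V^{-1}\mathbb{B}_n=\mathbb{B}_n$ and $V^{-1}PV=P_{n-s}$.

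Next I would establish that $\varphi$ fixes $M$ pointwise and that $M$ is linear. The commutation gives $\varphi(M)\subseteq M$; since $\varphi_j|_M=(\varphi|_M)^j\to\mathrm{id}_M$ and $d(\varphi|_M)(0)=A|_L=I$, Cartan's uniqueness theorem on the bounded complex submanifold $M$ (biholomorphic to $\mathbb{B}_{n-s}$) yields $\varphi|_M=\mathrm{id}_M$, and therefore $\rho_\varphi\circ\varphi=\rho_\varphi$. Consequently $\mathrm{Fix}(\varphi)=M$: indeed, $w\in\mathrm{Fix}(\varphi)$ forces $\rho_\varphi(w)=\lim\varphi_j(w)=w\in M$, and Rudin's theorem on the fixed-point set of a holomorphic self-map of $\mathbb{B}_n$ identifies $M$ with $L\cap\mathbb{B}_n$. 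In the new coordinates $\tilde\varphi:=V^{-1}\varphi V$ and $\tilde\rho:=V^{-1}\rho_\varphi V$ are self-maps of $\mathbb{B}_n$, with $\tilde\rho$ a holomorphic retraction onto $U\cap\mathbb{B}_n$ and $d\tilde\rho(0)=P_{n-s}$.

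The crux is the rigidity step $\tilde\rho=P_{n-s}$ as functions on $\mathbb{B}_n$: knowing the derivative of the retraction at $0$ is immediate, but the pointwise equality is a genuine rigidity statement for holomorphic retractions of $\mathbb{B}_n$ through an interior fixed point. Granted this, the identity $\rho_\varphi\circ\varphi=\rho_\varphi$ conjugates to $P_{n-s}\tilde\varphi=P_{n-s}$, which says precisely that the last $n-s$ components of $\tilde\varphi$ coincide with the coordinate functions $w_{s+1},\ldots,w_n$; iterating delivers the same block form for every $\tilde\varphi_j=V^{-1}\varphi_j V$, and the uniform convergence $\tilde\varphi_j\to P_{n-s}$ inherited from $\varphi_j\to\rho_\varphi$ records that the first $s$ components tend to zero uniformly on compact subsets of $V^{-1}\mathbb{B}_n$. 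I expect this rigidity step to be the main obstacle, and would attack it by iterated Schwarz--Pick and Cartan-type arguments along the fibres of $\tilde\rho$, or alternatively by invoking the structural results for retracts of the ball from Abate's book.
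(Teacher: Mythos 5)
Your proof is incomplete at precisely the point you flag as the crux: the rigidity statement $V^{-1}\rho_\varphi V=P_{n-s}$ (equivalently, that a holomorphic retraction of $\mathbb{B}_n$ fixing the origin equals its differential) is asserted but never proved, only accompanied by a list of possible lines of attack. This is a genuine gap, not a routine verification, and it is worth seeing why: it cannot follow from the idempotency $\rho\circ\rho=\rho$ together with $\rho(0)=0$ and $d_0\rho=P_{n-s}$ by formal power-series or Cartan-at-the-origin arguments alone. The germ $\sigma(z_1,z_2)=(0,\,z_2+z_1^2)$ satisfies $\sigma\circ\sigma=\sigma$, $\sigma(0)=0$ and $d_0\sigma=P_1$, yet is not linear; it merely fails to map $\mathbb{B}_2$ into itself. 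So the boundedness and geometry of the ball must enter in an essential way. The paper devotes its Step 2 to exactly this point, arguing by comparing lowest-degree homogeneous terms in the expansion of $\rho\circ\rho=\rho$. Your suggested alternatives (Schwarz--Pick along the fibres, or the structure theory of holomorphic retracts in Abate's book) are reasonable, but until one of them is carried out or a precise citation is supplied, the argument does not close.

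Everything else in your proposal is correct and follows the same skeleton as the paper: normalize so that $d_0\rho_\varphi$ becomes $P_{n-s}$, upgrade this to $\rho_\varphi=P_{n-s}$ as maps, then read off the block form of every $\varphi_j$ from $\rho_\varphi\circ\varphi_j=\rho_\varphi$. Two of your variations are worth noting. First, the paper's Step 1 uses the Jordan form of the idempotent $d_0\rho_\varphi$ and obtains only an invertible $V$; your observation that $\|d_0\varphi_j\|\leq 1$ forces the limit $d_0\rho_\varphi$ to be a contractive idempotent, hence an orthogonal projection, lets you take $V$ unitary, which actually sharpens the statement (then $V^{-1}\mathbb{B}_n=\mathbb{B}_n$). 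Second, you obtain $\rho_\varphi\circ\varphi=\rho_\varphi$ from $\varphi|_{M_\varphi}=\mathrm{id}$ together with the commutation relation, whereas the paper deduces it from the fact that $\rho_\varphi$ and $\varphi\circ\rho_\varphi$ are both limit points of the convergent sequence $\{\varphi_j\}$; both routes are valid, and the final passage from $P_{n-s}\circ V^{-1}\varphi_j V=P_{n-s}$ to the claimed decomposition and the locally uniform convergence is the same as the paper's Step 3.
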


Let $X$ be a Banach space and $T:X\rightarrow X$ be an operator. Then, We say that $T$ is mean ergodic if
$$M_j(T)=\dfrac{1}{j} \sum_{i=1}^j T^i.$$
converges to a bounded operator defined on $X$ for the strong operator topology. Uniformly mean ergodicity will define in a same way with convergence in the operator norm.

Lotz \cite{Lotz} proved that: If $X$ is a Grothendieck Banach space with Dunford-Pettis property (GDP space), and $T\in L(X)$ satisfies $\|T^n/n\|\rightarrow 0$, then $T$ is mean ergodic if and only if it is uniformly mean ergodic. For the definition of GDP spaces see \cite[Pages 208-209]{Lotz}.

For some work on the mean ergodicity of composition operators see \cite{arendt1,arendt2,beltran, bonet1,jorda, jornet, keshavarzi3}.
The (uniformly) mean ergodicity of composition operators on $H^\infty(\mathbb{D})$ have been characterized in \cite{beltran}. It is well-known that $H^\infty(\mathbb{D})$ is a GDP space. Thus, a composition operator, acting on $H^\infty(\mathbb{D})$, is mean ergodic if and only if it is uniformly mean ergodic. However, we do not know whether $H^\infty(\mathbb{B}_n)$ is a GDP space or not. In \cite{keshavarzi3}, the first author has proved that if $\varphi$ is a holomorphic self-map of the unit ball with converging iterates and an interior fixed point, then the mean ergodicity and the uniformly mean ergodicity of $C_\varphi$ are equivalent. In the following theorem, we give this equivalence for all $\varphi\in Hol(\mathbb{B}_n,\mathbb{B}_n)$ with an interior fixed point.

 \begin{theorem} \label{mt2}
Let $\varphi$ be a holomorphic self-map of the unit ball with a fixed point in $\mathbb{B}_n$. Then, the following statements are equivalent.
\begin{itemize}
\item[(i)]  $C_\varphi$ is mean ergodic on $H^\infty(\mathbb{B}_n)$.
\item[(ii)]  $C_\varphi$ is uniformly mean ergodic on $H^\infty(\mathbb{B}_n)$.
\item[(iii)] There is a $k\in \mathbb{N}$ such that $\| \varphi_{kj}-\rho_\varphi \|_\infty \rightarrow 0$, as $j\rightarrow \infty$.
\end{itemize}
\end{theorem}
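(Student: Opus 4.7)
\emph{Plan of proof.} Implication (ii)$\Rightarrow$(i) is immediate. Our strategy is to reduce both nontrivial directions to the case of converging iterates, so that Theorem~\ref{mt1} applies directly to $\varphi_k$. Conjugation by a ball automorphism sending the fixed point to the origin replaces $C_\varphi$ with a similar operator on $H^\infty(\mathbb{B}_n)$ and preserves every quantity in the statement, so we assume $\varphi(0)=0$.

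\emph{(iii)$\Rightarrow$(ii).} If $\|\varphi_{kj}-\rho_\varphi\|_\infty\to 0$, then $\varphi_k$ has converging iterates, and by Theorem~\ref{mt1} there is an invertible $V$ such that $V^{-1}\varphi_{kj}V=(h_j^1,\ldots,h_j^s)\oplus P_{n-s}$. Passing to the limit gives $V^{-1}\rho_\varphi V=P_{n-s}$, so $V^{-1}\varphi_{kj}V-V^{-1}\rho_\varphi V$ is supported only in its first $s$ coordinates, whose uniform norms $\|h_j^i\|_\infty$ tend to zero. The next step is to promote this to $\|C_\varphi^{kj}-C_{\rho_\varphi}\|_{\mathrm{op}}\to 0$: for $\tilde f=f\circ V\in H^\infty(V^{-1}\mathbb{B}_n)$, integrate $\tilde f$ along the segment from $(0,w')$ to $(h_j(w),w')$ (both lie in the convex ellipsoid $V^{-1}\mathbb{B}_n$ by the retraction property of $P_{n-s}$) and combine this with balanced-domain Cauchy estimates in the first $s$ variables to bound $\tilde f\circ(V^{-1}\varphi_{kj}V)-\tilde f\circ P_{n-s}$ by a geometric series in $\max_i\|h_j^i\|_\infty$, uniformly for $\|f\|_\infty\le 1$. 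The Ces\`aro lemma then upgrades this to $\|M_m(C_\varphi^k)-C_{\rho_\varphi}\|_{\mathrm{op}}\to 0$. Finally, the block identity
\[
 M_{km}(C_\varphi)=\frac{1}{k}\sum_{r=1}^{k}C_\varphi^{r}\cdot\frac{1}{m}\sum_{q=0}^{m-1}(C_\varphi^{k})^{q},
\]
together with the bound $\|M_{km+r}(C_\varphi)-M_{km}(C_\varphi)\|_{\mathrm{op}}=O(1/m)$ for $0\le r<k$, transfers uniform mean ergodicity from $C_\varphi^k$ to $C_\varphi$.

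\emph{(i)$\Rightarrow$(iii).} Mean ergodicity supplies a limit $Pf=\lim_n M_n(C_\varphi)f$ in $\|\cdot\|_\infty$ for every $f\in H^\infty(\mathbb{B}_n)$. Applied to suitable polynomial eigenvectors of $C_\varphi$ coming from the linearized action of $d\varphi_0$, and combined with the Denjoy--Wolff--Abate analysis (cf.\ \cite[Chapter 2]{abate}), this forces the unimodular eigenvalues of $d\varphi_0$ to be roots of unity of uniformly bounded order; let $k$ be their common order. Then $d(\varphi_k)_0$ has no unimodular eigenvalue other than $1$, so $\varphi_{kj}$ converges to $\rho_\varphi$ on compact subsets and Theorem~\ref{mt1} applies to $\varphi_k$. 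The decisive remaining step is to upgrade this locally uniform convergence to the global estimate $\|\varphi_{kj}-\rho_\varphi\|_\infty\to 0$: using the mean ergodic decomposition $H^\infty(\mathbb{B}_n)=\ker(I-C_\varphi^k)\oplus\overline{(I-C_\varphi^k)H^\infty(\mathbb{B}_n)}$ together with the block form of $V^{-1}\varphi_{kj}V-P_{n-s}$ from Theorem~\ref{mt1}, one argues that each coordinate function $(V^{-1}\varphi_{kj}V)^i$ for $1\le i\le s$ admits a quantitative ergodic decay, which is (iii).

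\emph{Main obstacle.} The hard direction is (i)$\Rightarrow$(iii), and within it the upgrade from compact-open to global uniform convergence of $\varphi_{kj}$. Since Cauchy estimates for bounded holomorphic functions degenerate near $\partial\mathbb{B}_n$, normal-family arguments alone do not suffice; one has to exploit the full strength of mean ergodicity on the non-reflexive space $H^\infty(\mathbb{B}_n)$---where Lotz's GDP theorem is not known to apply---together with the rigid block structure supplied by Theorem~\ref{mt1}, to reduce uniform convergence of $\varphi_{kj}$ to a finite-dimensional ergodic decay statement for the first $s$ conjugated coordinates. This is also the place where the present result improves on the partial theorem of \cite{keshavarzi3}, which presupposed converging iterates and therefore bypassed this step.
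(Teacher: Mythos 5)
Your outline gets the easy reductions right (conjugating the fixed point to the origin, the trivial implication (ii)$\Rightarrow$(i), and invoking Theorem \ref{mt1} for the block form of $V^{-1}\varphi_{kj}V$), but both nontrivial directions contain genuine gaps. In (iii)$\Rightarrow$(ii), the step where you bound $\tilde f\circ(V^{-1}\varphi_{kj}V)-\tilde f\circ P_{n-s}$ by ``Cauchy estimates in the first $s$ variables'' cannot work as stated: for $z$ with $P_{n-s}z$ near the boundary, the slice of the domain in which the first $s$ coordinates vary has radius comparable to $(1-|P_{n-s}z|^2)^{1/2}$, which shrinks to $0$, so the derivative bounds degenerate exactly where $|h_j(z)|$ may be comparable to that radius. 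Concretely, $f(z)=z_1^2/(1-z_2)$ is bounded by $2$ on $\mathbb{B}_2$, yet for the admissible pairs $(h,w')$ and $(0,w')$ with $w'=t\to 1^-$ and $h=(1-t^2)^{1/2}(1-\delta)$ one has $|h|\to 0$ while $|f(h,w')-f(0,w')|\to 2(1-\delta)^2$; hence no estimate depending only on $\max_i\|h_j^i\|_\infty$ can give $\|C_{\varphi_{kj}}-C_{\rho_\varphi}\|_{\mathrm{op}}\to 0$. The paper circumvents precisely this by passing to the invariant Kobayashi/Bergman metric on the ellipsoid $V^{-1}\mathbb{B}_n$: it proves $\sup_{z}\beta(\varphi_{kj}(z),\rho_\varphi(z))\to 0$ via \cite[Lemma 4.1]{keshavarzi3} and then concludes uniform mean ergodicity from \cite[Theorem 3.6]{keshavarzi3}. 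Some distance-decreasing input of this kind (or an equivalent substitute) is indispensable and is missing from your argument.

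The more serious gap is in (i)$\Rightarrow$(iii). The existence of $k$ with $\varphi_{kj}\to\rho_\varphi$ locally uniformly, together with the SOT limit $\frac{1}{k}\sum_{i=0}^{k-1}C_{\rho_\varphi\circ\varphi_i}$ of $M_j(C_\varphi)$, is indeed available (the paper quotes \cite[Lemma 3.3]{keshavarzi3}); but your ``decisive remaining step''---upgrading locally uniform to globally uniform convergence---is only asserted (``one argues that each coordinate function admits a quantitative ergodic decay''), and this assertion is exactly the content to be proved. Mean ergodicity applied to the coordinate functionals yields uniform convergence of the Ces\`aro means $\frac{1}{j}\sum_l\varphi_l^i$, not of $\varphi_{kj}$ itself, and the ergodic decomposition of $H^\infty(\mathbb{B}_n)$ does not by itself control the sup norm of the iterates near $\partial\mathbb{B}_n$. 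The paper's actual mechanism is a contradiction argument with substantial content: if $\|\varphi_{kj}-\rho_\varphi\|_\infty\not\to 0$, a fixed $\varepsilon$ persists for all $j$ (Claim \ref{cl0}); one then locates points $a_j$ whose orbits approach $\partial\mathbb{B}_n$ at a controlled geometric rate while $|\varphi_{2km_j}(a_j)-\rho_\varphi(a_j)|\ge\varepsilon$ (Claim \ref{cl1}, using Lemma \ref{l6} and Proposition \ref{p4} to convert Euclidean deviation into Bergman-metric separation); these orbits form an interpolating sequence by \cite[Lemma 3.11]{keshavarzi3}, producing $f\in H^\infty(\mathbb{B}_n)$ with $f\circ\rho_\varphi\equiv 0$ and $f(\varphi_l(a_j))=|\varphi_{2km_j}(a_j)-\rho_\varphi(a_j)|^2$ (Claim \ref{cl3}); evaluating $M_{m_j}(C_\varphi)f$ at $a_j$ then contradicts the SOT convergence. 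None of this interpolation machinery appears in your proposal, and without it---or a genuinely new replacement---the implication (i)$\Rightarrow$(iii) is not established.
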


As the final result, we prove that every holomorphic self-map of $\mathbb{B}_n$ which has no interior fixed point induces a composition that is not mean ergodic on $H^\infty(\mathbb{B}_n)$. This theorem gives the answer to \cite[Question 3.16]{keshavarzi3}.

\begin{theorem} \label{mt3}
Let the holomorphic function $\varphi:\mathbb{B}_n\rightarrow \mathbb{B}_n$ has no interior fixed point. Then, $C_\varphi$ is not mean ergodic on $H^\infty(\mathbb{B}_n)$.
\end{theorem}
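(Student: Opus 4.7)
My plan is to argue by contradiction. Suppose $C_\varphi$ is mean ergodic on $H^\infty(\mathbb{B}_n)$. By the Denjoy--Wolff theorem for the ball (see \cite[Chapter 2]{abate}), there is a unique $\zeta\in\partial\mathbb{B}_n$ with $\varphi_j\to\zeta$ uniformly on compact subsets of $\mathbb{B}_n$. Set $z_k:=\varphi_k(0)$; this is an infinite sequence in $\mathbb{B}_n$ converging to $\zeta$. Mean ergodicity implies, for every $f\in H^\infty(\mathbb{B}_n)$, that $M_j(C_\varphi)f$ converges in the sup norm, and in particular, evaluating at $0$, that the scalar averages
\[
\frac{1}{j}\sum_{k=1}^j f(z_k)
\]
converge in $\mathbb{C}$ for every such $f$. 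Thus the proof reduces to exhibiting a single $f\in H^\infty(\mathbb{B}_n)$ for which this numerical sequence has no Cesàro limit.

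To produce such an $f$, I would first use the Schwarz--Pick inequality (strictly contractive in the non-automorphism case, isometric in the automorphism case) together with $z_k\to\zeta$ to extract an increasing sequence $(k_m)_m$ of indices such that $(z_{k_m})_m$ is uniformly separated in the pseudohyperbolic metric of $\mathbb{B}_n$. By the Berndtsson--Carleson interpolation theorem for $H^\infty(\mathbb{B}_n)$, every bounded sequence $(a_m)\in\ell^\infty$ is then realized as $(f_0(z_{k_m}))_m$ for some $f_0\in H^\infty(\mathbb{B}_n)$ with $\|f_0\|_\infty\le C\sup_m|a_m|$. To force the chosen pattern $(a_m)$ to dominate the Cesàro average, I would linearize $\varphi$ near $\zeta$ via a Koenigs (hyperbolic) or Abel (parabolic) model in the automorphism case---where the orbit becomes geometric or arithmetic and $f$ can be built explicitly as $h\circ\exp$ for $h\in H^\infty(\mathbb{D})$ interpolating $(a_k)$ on a geometric sequence, making $f(z_k)=a_k$ for every $k$---and in the general case multiply $f_0$ by a high power of the peak function $p(z)=(1+\langle z,\zeta\rangle)/2$ to localize the non-negligible values of $f$ near the chosen subsequence.

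Choosing $(a_m)$ to be the indicator of the dyadic block family $\bigcup_{N\ge 1}[2^{2N},2^{2N+1})$, whose Cesàro averages oscillate between $1/3$ and $2/3$, produces an $f\in H^\infty(\mathbb{B}_n)$ with divergent $\tfrac{1}{j}\sum_{k=1}^j f(z_k)$, contradicting the consequence of mean ergodicity from the first paragraph and completing the argument. The main obstacle is the non-automorphism case, in which no global linearization of $\varphi$ is available: there one must delicately balance the density of the interpolating subsequence (so that its contribution dominates the average at $0$) against the separation required for $H^\infty(\mathbb{B}_n)$-interpolation with uniform norm bounds, and verify that the peak-function correction suppresses the non-subsequence iterates enough for the chosen pattern $(a_m)$ to survive in the Cesàro sum.
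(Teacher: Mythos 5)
Your reduction in the first paragraph is fine: SOT-convergence of $M_j(C_\varphi)$ plus evaluation at $0$ does force Ces\`aro convergence of $\frac{1}{j}\sum_{k=1}^j f(z_k)$ for every $f\in H^\infty(\mathbb{B}_n)$, and the dyadic-block pattern would indeed give divergent averages \emph{if} you could realize it as $f(z_k)=a_k$ along essentially the whole orbit. The gap is that you never establish this, and in an important case it cannot be done the way you propose. To control the Ces\`aro average you need to prescribe $f$ on a positive-density subset of the orbit while keeping the remaining values negligible. When $\varphi$ is of parabolic type with zero hyperbolic step, $\beta(z_k,z_{k+1})\to 0$, so the orbit is not uniformly separated and is not an interpolating sequence; any separated subsequence you extract will typically have vanishing density in the orbit, so its contribution to $\frac{1}{j}\sum_{k\le j}f(z_k)$ is swamped by the uncontrolled terms. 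Your proposed remedy --- multiplying by a high power of the peak function $p(z)=(1+\langle z,\zeta\rangle)/2$ --- cannot separate the chosen subsequence from the rest of the orbit, because \emph{all} the points $z_k$ converge to the same Denjoy--Wolff point $\zeta$, so $p(z_k)^N\to 1$ along the entire orbit for each fixed $N$; the peak function localizes in space, not along the orbit. The Koenigs/Abel linearization you invoke is also only available in one variable (or on a one-dimensional model), and transporting an $H^\infty(\mathbb{D})$ interpolant back to $H^\infty(\mathbb{B}_n)$ through such a model is not justified. So the ``main obstacle'' you name at the end is in fact an unresolved hole, not a technicality.

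For contrast, the paper avoids interpolation entirely. It restricts the averages to a one-variable family $T_jf:=f\circ\varphi_j^1(z,0,\dots,0)$ acting on $H^\infty(\mathbb{D})$, observes that SOT-convergence of $M_j(C_\varphi)$ forces SOT-convergence of $N_j=\frac1j\sum T_i$, and then uses that $H^\infty(\mathbb{D})$ is a Grothendieck space with the Dunford--Pettis property together with Lotz's theorem to upgrade this to operator-norm convergence; the contradiction is then obtained as in Beltr\'an-Meneu et al., using $\varphi_j\to z_0\in\partial\mathbb{B}_n$ and the single test function $g(z)=(1+z)/2$. This sidesteps both the unknown GDP status of $H^\infty(\mathbb{B}_n)$ and all of the orbit-geometry case analysis your construction would require. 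If you want to salvage your approach, you would need either a proof that the full orbit is interpolating with uniform constants in every non-elliptic case (false in the parabolic zero-step case) or a genuinely different mechanism for handling that case.
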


\section{Basic results}
Every automorphism $\varphi$ of $\mathbb{B}_n$ is of the form $\varphi = U\varphi_a= \varphi_b V$,
where $U$ and $V$ are unitary matrices of $\mathbb{C}^n$ and
\begin{equation}\label{e8}
\varphi_a(z)=\dfrac{a-P_a(z)-s_aQ_a(z)}{1-\langle z,a\rangle}, \qquad z\in \mathbb{B}_n,
\end{equation}
where $a\neq 0$, $s_a=\sqrt{1-|a|^2}$, $P_a$ is the projection from $\mathbb{C}^n$ onto the subspace $\langle a \rangle$ spanned by $a$, and $Q_a$ is the projection from $\mathbb{C}^n$ onto  $\mathbb{C}^n\ominus \langle a \rangle$. Clearly, $\varphi_a(0)=a$, $\varphi_a(a)=0$, and $\varphi_a \circ \varphi_a(z)=z$.
It is well-known that an automorphism $\varphi$ of $\mathbb{B}_n$ is a unitary matrix of $\mathbb{C}^n$ if and only if $\varphi(0)=0$. 

Let $\Omega$ be a strongly pseudoconvex bounded domain.
The infinitesimal Kobayashi metric $F_K:\Omega\times \mathbb{C}^n \rightarrow [0,\infty)$ is defined as:
$$F_K(z,w)=\inf  \Big{\{} C>0: \ \exists f\in H(\mathbb{D}, \Omega) \ with \ f(0)=z, \ f^\prime(0)=\dfrac{w}{C} \Big{\}},$$
where $H(\mathbb{D}, \Omega)$ is the space of analytic functions from $\mathbb{D}$ to $\Omega$.
Let $\gamma:[0,1]\rightarrow \Omega$ be a $C^1$-curve. The Kobayashi length of $\gamma$ is defined as:
$$L_K(\gamma)= \int_0^1 F_K (\gamma(t), \gamma^\prime(t))dt.$$
For $z,w\in \Omega$, the Kobayashi metric function is defined as:
$$k_\Omega(z,w)=\inf \Big{ \{} L_K(\gamma); \ \gamma \ is \ C ^1-curve \ with \ \gamma(0)=z \  and \ \gamma(1)=w \Big{\}}.$$

If $\Omega$ and $\Lambda$ are two strongly pseudoconvex bounded domains and  $\varphi:\Omega\rightarrow\Lambda$ is a holomorphic function, then from \cite[Proposition 2.3.1]{abate}, we have:
\begin{equation}\label{e5}
k_\Lambda(\varphi(z),\varphi(w))\leq k_\Omega(z,w), \qquad \forall z,w\in\Omega.
\end{equation}
Thus, $k_\Omega$ is invariant under automorphisms, that is,
$$k_\Omega(\varphi(z),\varphi(w))=k_\Omega(z,w),$$
for all $z,w\in \mathbb{B}_n$ and $\varphi:\Omega\rightarrow \Omega$ is an automorphism.

Let $\beta$ from $\mathbb{B}_n \times \mathbb{B}_n $ to $ [0,\infty)$ be the Bergman metric.  From \cite[Corollary 2.3.6]{abate}, the Kobayashi metric and the Bergman metric coincide on $\mathbb{B}_n$. We have:
\begin{equation} \label{e0}
\beta(z,w)= \dfrac{1}{2} \log \dfrac{1+|\varphi_z(w)|}{1-|\varphi_z(w)|}, \qquad z,w\in \mathbb{B}_n.
\end{equation}
We shall denote by $B(a,r)$ the Bergman ball centered at $a\in \mathbb{B}_n$ with radius $r>0$, that is,
$$B(a,r)=\{z\in \mathbb{B}_n: \ \beta(a,z)<r\}.$$
It is well-known (see \cite[page 134]{abate}) that $B(a,r)$ is the ellipsoid
\begin{equation} \label{e11}
\dfrac{|P_a(\zeta)-a_r|^2}{R^2 s^2}+\dfrac{|Q_a(\zeta)|^2}{R^2s}<1,
\end{equation}
where $R=\tanh r$, $a_r=\frac{1-R^2}{1-R^2|a|^2}a$ and $s=\frac{1-|a|^2}{1-R^2|a|^2}$.

Let $P_k$ be the space homogeneous polynomial $P:\mathbb{B}_n\rightarrow \mathbb{C}$ of degree $k$. The Taylor series expansions of functions in $H^\infty(\mathbb{B}_n)$
yield a direct sum decomposition of
$$H^\infty(\mathbb{B}_n) = P_0\oplus P_1\oplus ... \oplus P_m \oplus R_m;$$
where the remaining space $R_m$ consists of the functions $h\in H^\infty(\mathbb{B}_n)$ such
that $|h(z)|/\|z\|^m$ is bounded for $z$ near $0$. Similarly, $f:\mathbb{B}_n\rightarrow \mathbb{C}^n$ admits a homogeneous expansion:
$$f(z)=\sum_{k=0}^\infty F_k(z)=f(0)+f^\prime(0)z+...,$$
where all $n$ component functions of each $F_k$ are homogeneous polynomial of degree $k$.

It should be noted that $d_z \varphi=\varphi^\prime(z)$. Note that $d_z \varphi$ is a matrix:
 $$d_z \varphi:= \begin{bmatrix}
\frac{\partial \varphi^1}{\partial z_1} & \cdots & \frac{\partial \varphi^1}{\partial z_n}\\
\cdots & \cdots & \cdots\\
\frac{\partial \varphi^n}{\partial z_1} & \cdots & \frac{\partial \varphi^n}{\partial z_n}
\end{bmatrix} (z).$$

\section{Proof of Theorem \ref{mt1}}

Let $n-s$ be the dimension of $M_\varphi$.

If $s=0$, then from \cite[Proposition 2.2.14]{abate} and \cite[Proposition 3.8]{keshavarzi3}, $\varphi$ is a unitary matrix. Since the iterates of $\varphi$ are convergent, $\varphi$ is the identity matrix.
If $s=n$, then from \cite[Theorem 2.2.32]{abate}, $M_\varphi=\{0\}$ and $\rho_\varphi\equiv 0$. Therefore, for $s=0$ or $n$, the result is obtained by considering $V$ as the identity matrix.

Thus, let $1\leq s\leq n-1$.
We give the proof in three steps:

\subsection*{Step 1} There is an invertible matrix $V$ so that $V^{-1} d_0 \rho V=P_{n-s}$.
\begin{proof}
Recall that $P_{n-s}$ is the orthogonal projection from $\mathbb{C}^n$ onto $e_{s+1}\oplus...\oplus e_n$.

 Let $V$ be an invertible matrix so that $V^{-1} d_0 \rho V$ be the Jordan canonical form of $d_0 \rho$. Since, $\rho^2=\rho$ and $\rho(0)=0$, the matrix $d_0\rho$ is also an idempotent. Thus, the eigenvalues of $V^{-1} d_0 \rho V$ are in $\{0,1\}$. Note that since $\rho(\mathbb{B}_n)=M$ and  $\rho$ is identity on $M$, it is easy to show that $0$ and $1$ will be repeated $s$ and $n-s$ times as the eigenvalues of $d_0\rho$, respectively.

 We have
\begin{equation*}
V^{-1} d_0 \rho V=J_1(0)\oplus...\oplus J_k(0)\oplus I_1(1)\oplus ...\oplus I_l(1),
\end{equation*}
where $J_i(0)$ and $I_i(1)$ are the blocks associated with the eigenvalues $0$ and $1$, respectively. Now since $d_0\rho$ is an idempotent, the blocks $J_i(0)$ and $I_i(1)$ must be $1\times 1$.
That is,
\begin{equation*}
V^{-1} d_0\rho V=
\begin{bmatrix}
0 & 0   \\
0  & I_{n-s} \\
\end{bmatrix},
\end{equation*}
where $I_{n-s}$ is the $(n-s)\times(n-s)$ identity matrix. Hence,
 $V^{-1} d_0 \rho V=P_{n-s}$.
\end{proof}

From \cite[Theorem 2.1.21]{abate}, we know that if $f:\mathbb{B}_n\rightarrow \mathbb{B}_n$ is holomorphic, $f(0)=0$ and $d_0 f$ is identity, then so is $f$. In the next step, we want to show that if $d_0 f=0_s\oplus  I_{n-s}$, then $f=0_s\oplus  I_{n-s}$.

\subsection*{Step 2} For the matrix $V$, obtained in step 1, we have $V^{-1} \rho V=P_{n-s}$.
\begin{proof}
Let $V^{-1} \rho V\neq P_{n-s}$.
 Consider the function $\psi=V^{-1} \rho V-P_{n-s}:\mathbb{B}_n\rightarrow \mathbb{C}^n$. Since $d_0 \psi=V^{-1} d_0\rho V-d_0P_{n-s}=0$, $\psi(0)=0$, but $\psi\neq 0$, we can write:
$$V^{-1} \rho V(z)=P_{n-s} (z)+ F_k(z) +\sum_{j=k+1}^\infty F_j(z),$$
where $F_k$ is a homogeneous polynomial of degree $k\geq 2$. In summation, $F_j$ is zero or a homogeneous polynomial of degree $j$.

Note that every component of a homogeneous polynomial of degree $j$ is a summation of polynomials
$$z^m=z_1^{m_1}...z_n^{m_n},$$
where $z=(z_1,...,z_n)$, $m=(m_1,...,m_n)\in \mathbb{N}^n$, and $m_1+...+m_n=j$.
Thus, for $j\geq k$ if $F_j=(F_j^1,...,F_j^n)$ is non-zero, then each component of $F_j(V^{-1} \rho V(z))$ is a summation of polynomials
\begin{align*}
&\Big(P_{n-s} (z)+ F_k(z) +\sum_{j=k+1}^\infty F_j(z)\Big)^m\\
&\qquad \ \ \ =\Big(F_k^1(z) +\sum_{j=k+1}^\infty F_j^1(z)\Big)^{m_1}...\Big(F_k^s(z) +\sum_{j=k+1}^\infty F_j^s(z)\Big)^{m_s}\\
 &\qquad \qquad \times \Big(z_{s+1}+F_k^{s+1}(z) +\sum_{j=k+1}^\infty F_j^{s+1}(z)\Big)^{m_{s+1}}...\Big(z_n+F_k^n(z) +\sum_{j=k+1}^\infty F_j^n(z)\Big)^{m_n}
\end{align*}
Thus, from the above statement and the assumption $1\leq s\leq n-1$, if $F_j$ is non-zero for $j\geq k$, then each component of $F_j(V^{-1} \rho V(z))$ is a polynomial with a degree greater than or equal to:
$$km_1+...+km_s+m_{s+1}+...+m_n.$$
 On the other hand, since $k\geq 2$, we have
$$km_1+...+km_s+m_{s+1}+...+m_n>\sum_{i=1}^n m_i=j.$$
  Thus,
$$V^{-1}\rho^2 V(z)= P_{n-s} (z)+P_{n-s} F_k(z) +\sum_{j=k+1}^\infty G_j(z),$$
where each $G_j$ is zero or a homogeneous polynomial of degree $j$.
 Since $\rho^2=\rho$, we must have $F_k=P_{n-s} F_k$ which contradicts the assumption that $s\neq 0, n$.
\end{proof}

Indeed, we proved the following result in steps $1$ and $2$ as well as the paragraph before them:
\begin{corollary}
Every holomorphic retraction $\rho$ on $\mathbb{B}_n$ which fixes the origin is a matrix.
\end{corollary}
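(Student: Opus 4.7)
The plan is to observe that Steps 1 and 2 above never actually used anything about $\rho=\rho_\varphi$ beyond the bare facts that $\rho$ is a holomorphic retraction and $\rho(0)=0$; so the same argument, decoupled from $\varphi$, proves the corollary. I would simply package this observation as the proof.

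In detail, let $\rho:\mathbb{B}_n\to\mathbb{B}_n$ be a holomorphic retraction with $\rho(0)=0$, and let $M=\rho(\mathbb{B}_n)$. It is standard that the image of a holomorphic retraction on $\mathbb{B}_n$ is a closed connected complex submanifold, so we may write $n-s=\dim_{\mathbb{C}} M$ with $0\le s\le n$. The two extreme cases are immediate: if $s=0$ then $M=\mathbb{B}_n$, and since any retraction is the identity on its image, $\rho=\mathrm{id}$; if $s=n$ then $M=\{0\}$ and $\rho\equiv 0$. Both are trivially given by matrices.

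For $1\le s\le n-1$ I would rerun Step 1: differentiating $\rho\circ\rho=\rho$ at $0$ yields $(d_0\rho)^2=d_0\rho$, so $d_0\rho$ is an idempotent matrix. Since $\rho$ fixes $M$ pointwise, $d_0\rho$ is the identity on $T_0M$, hence the $1$-eigenspace has dimension $n-s$ and the $0$-eigenspace has dimension $s$. Choosing an invertible $V$ that conjugates the two eigenspaces onto the standard coordinate splitting gives $V^{-1}(d_0\rho)V=P_{n-s}$. Then I would rerun Step 2 verbatim with this $V$: setting $\psi:=V^{-1}\rho V-P_{n-s}$ and assuming $\psi\not\equiv 0$, expand $\psi=F_k+\sum_{j>k}F_j$ with $k\ge 2$, and inspect the lowest-degree homogeneous part of $V^{-1}\rho\circ\rho V$ to derive $F_k=P_{n-s}F_k$, contradicting $1\le s\le n-1$. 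Hence $V^{-1}\rho V=P_{n-s}$, i.e.\ $\rho=V P_{n-s} V^{-1}$ is a matrix.

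There is no real obstacle, since the hard work has already been done in Steps 1 and 2; the only thing worth explaining is why the argument there did not require the hypothesis that $\rho$ arise as $\rho_\varphi$. The one minor expository choice is whether to quote the fact that $M$ is a submanifold or to derive the integer $n-s$ directly from the rank of the idempotent $d_0\rho$ and then use Step 2 to see a posteriori that $M=V(\{0\}\oplus\mathbb{C}^{n-s})\cap V\mathbb{B}_n$ is automatically a linear slice, which is what lets the corollary be stated cleanly.
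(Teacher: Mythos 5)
Your proposal is correct and is essentially identical to the paper's own justification: the authors explicitly note that the corollary is exactly what Steps 1 and 2 (together with the preceding paragraph disposing of the cases $s=0$ and $s=n$) establish, since those steps only use that $\rho$ is a holomorphic idempotent fixing the origin. Your direct handling of the extreme cases ($M=\mathbb{B}_n$ gives $\rho=\mathrm{id}$, $M=\{0\}$ gives $\rho\equiv 0$) is a slightly cleaner decoupling from $\varphi$ than the paper's, but the argument is the same.
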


\subsection*{Step 3} $(V^{-1}\varphi_j V)^{i}(z^1,...,z^n)=z^i$, for $i=s+1,...,n$ and $j\in \mathbb{N}$.
\begin{proof}
From step 2,
\begin{equation} \label{e6}
V^{-1}(\rho\circ \varphi) V=V^{-1}\circ\rho\circ V( V^{-1}\circ \varphi \circ V)=0_s\oplus \begin{bmatrix}
(V^{-1}\varphi V)^{s+1}(z)   \\
\vdots \\
(V^{-1}\varphi V)^n(z)\\
\end{bmatrix}.
\end{equation}
Moreover, since $\rho$ and $\varphi\circ \rho$  are the limit points of the convergent sequence $\{\varphi_j\}$, we have:
\begin{equation}\label{e7}
V^{-1}\rho\circ \varphi V=V^{-1}\varphi\circ \rho V=V^{-1} \rho V.
\end{equation}
Thus, \ref{e6}, \ref{e7}, and step 2 imply that
\begin{equation*}
\begin{bmatrix}
(V^{-1}\varphi V)^{s+1}(z)   \\
\vdots \\
(V^{-1}\varphi V)^n(z)\\
\end{bmatrix}= \begin{bmatrix}
z^{s+1}   \\
\vdots \\
z^n\\
\end{bmatrix}.
\end{equation*}
Again, by a similar argument, we can see that $\rho\circ \varphi_j=\varphi_j\circ \rho=\rho$. Thus,
\begin{equation*}
\begin{bmatrix}
(V^{-1}\varphi_j V)^{s+1}(z)   \\
\vdots \\
(V^{-1}\varphi_j V)^n(z)\\
\end{bmatrix}
=\begin{bmatrix}
z^{s+1}   \\
\vdots \\
z^n\\
\end{bmatrix}.
\end{equation*}
The proof is complete.
\end{proof}

\section{Proof of Theorem \ref{mt2}}

If $\varphi$ has an interior fixed point $a\in \mathbb{B}$, then $\psi:=\varphi_a \circ \varphi\circ \varphi_a$ is a holomorphic self-map of $\mathbb{B}_n$ that $\psi(0)=0$. Hence, without loss of generality, we assume that $\varphi(0)=0$. (ii)$\Rightarrow$ (i) is obvious.

\subsection{(iii)$\Rightarrow$ (ii)} 
 Since $\varphi_{kj}\rightarrow\rho$, from Theorem \eqref{mt1}, there is an invertible matrix $V$ so that
$$V^{-1}\varphi_{kj} V=\Big((V^{-1}\varphi_{kj} V)^1,...,(V^{-1}\varphi_{kj} V)^s\Big)\oplus P_{n-s},$$
and
\begin{equation*}
V^{-1} \rho V=
\begin{bmatrix}
0 & 0   \\
0  & I_{n-s} \\
\end{bmatrix}=P_{n-s}.
\end{equation*}
From the continuity of $V^{-1}$ and (iii), there is a $C>0$ so that
\begin{align} \label{e12}
&\lim_{j\rightarrow \infty}\sup_{z\in V^{-1}\mathbb{B}_n} \Big{|}((V^{-1}\varphi_{kj} V)^1,...,(V^{-1}\varphi_{kj} V)^s)(z)\Big{|}\notag\\
&\qquad \qquad \qquad \qquad \qquad \qquad=\lim_{j\rightarrow \infty}\|V^{-1} (\varphi_{kj}- \rho) \|_\infty\notag\\
&\qquad \qquad  \qquad \qquad \qquad \qquad \leq C \lim_{j\rightarrow \infty}\| \varphi_{kj}- \rho \|_\infty =0.
\end{align}
It is easy to see that $V^{-1}\mathbb{B}_n$ is a taut manifold. Thus, from \ref{e5} we have:
\begin{align*}
\sup_{z\in\mathbb{B}_n} \beta(\varphi_{kj} (z),\rho(z))&\leq \sup_{z\in \mathbb{B}_n} k_{V^{-1}\mathbb{B}_n}(V^{-1}\varphi_{kj}(z),V^{-1}\rho (z))\\
&= \sup_{z\in V^{-1}\mathbb{B}_n} k_{V^{-1}\mathbb{B}_n}(V^{-1}\varphi_{kj} V(z),V^{-1}\rho V(z)).
\end{align*}
Hence, from  \cite[Lemma 4.1]{keshavarzi3} and Equation \ref{e12}, we obtain:
\begin{align*}
\sup_{z\in\mathbb{B}_n} \beta(\varphi_{kj} (z),\rho(z))&\leq\sup_{z\in V^{-1}\mathbb{B}_n} \omega(\Big{|}((V^{-1}\varphi_{kj} V)^1,...,(V^{-1}\varphi_{kj} V)^s)(z)\Big{|},0)\\
&= \dfrac{1}{2} \sup_{z\in V^{-1}\mathbb{B}_n} \tanh^{-1} (\Big{|}((V^{-1}\varphi_{kj} V)^1,...,(V^{-1}\varphi_{kj} V)^s)(z)\Big{|})\rightarrow 0,
\end{align*}
as $j\rightarrow \infty$. Therefore, (ii) follows from  \cite[Theorem 3.6]{keshavarzi3}.

\subsection{(i)$\Rightarrow$ (iii)} Before presenting the proof, we state some auxiliary results.

For $k>0$ and $\zeta\in \partial \mathbb{B}_n$, we define the ellipsoid
$$E(k,\zeta)=\{ z\in \mathbb{B}_n: \ |1-\langle z,\zeta\rangle|^2\leq k(1-|z|^2)\}.$$
Let $\rho$ be a holomorphic self-map of the unit ball and $\eta>0$. Set
$$L(\rho,\eta)=\{z\in \mathbb{B}_n, \ \ \beta (z,\rho(z))\geq\eta\}.$$
The following lemma is an extension of \cite[Lemma 3.9]{keshavarzi3}. Since the proof is the same, we omit it.

\begin{lemma} \label{l6}
Let $\varphi$ be a holomorphic self-map of the unit ball, $\varphi(0)=0$, and $\rho$ be the holomorphic retraction associated with $\varphi$.  If $\eta>0$ be such that $L(\rho,\eta)\neq \emptyset$, then there is some $A>1$ such that
$$\dfrac{1-|\varphi(z)|}{1-|z|}>A, \qquad \forall z\in L(\rho,\eta).$$
\end{lemma}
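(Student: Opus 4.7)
The plan is to proceed by contradiction, following the strategy of \cite[Lemma 3.9]{keshavarzi3}. Assume no such $A>1$ exists. Then there is a sequence $\{z_k\}\subset L(\rho,\eta)$ with
$$\frac{1-|\varphi(z_k)|}{1-|z_k|}\to 1,$$
since the ratio is always at least $1$ by the Schwarz lemma for the ball (recall $\varphi(0)=0$). Passing to a subsequence, we may assume $z_k\to z_0\in\overline{\mathbb{B}_n}$. The proof then splits depending on whether $z_0$ lies in the interior $\mathbb{B}_n$ or on the boundary $\partial\mathbb{B}_n$.

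In the interior case $z_0\in\mathbb{B}_n$, continuity of the ratio forces $|\varphi(z_0)|=|z_0|$. The rigidity part of the Schwarz lemma for the ball then gives $\varphi(\lambda z_0/|z_0|)=\lambda\,\varphi(z_0)/|z_0|$ for every $\lambda\in\mathbb{D}$, i.e., $\varphi$ is a linear isometry from the complex line $L=\mathbb{C}z_0\cap\mathbb{B}_n$ onto $\varphi(L)$. Combining this with the fact that $\rho$ is itself linear (by the corollary preceding the lemma), with $\varphi_j\to\rho$ uniformly on compacta, and with the idempotence $\rho^2=\rho$, an iteration argument yields $\rho(z_0)=z_0$, and hence $\beta(z_0,\rho(z_0))=0$, contradicting $z_0\in L(\rho,\eta)$.

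In the boundary case $z_0\in\partial\mathbb{B}_n$, Julia's lemma for the ball applies. The condition $A(z_k)\to 1$ along $z_k\to z_0$ identifies $z_0$ as a boundary point at which the Julia dilation coefficient of $\varphi$ equals $1$, so $\varphi$ maps each horoball $E(k,z_0)$ into a horoball of the same size at the non-tangential limit point. Iterating and using $\varphi_j\to\rho$ forces these horoballs to intersect arbitrarily small Bergman neighborhoods of $M_\varphi$. The explicit ellipsoid formula for Bergman balls in equation \ref{e11} then allows one to compare them with the horoballs at $z_0$ and to conclude that $\beta(z_k,\rho(z_k))\to 0$, contradicting $z_k\in L(\rho,\eta)$.

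The main obstacle is the boundary case: the heart of the argument is a careful geometric comparison between the Julia horoballs $E(k,z_0)$ and the Bergman-ball ellipsoids of equation \ref{e11} centered at $\rho(z_k)\in M_\varphi$. The normal form supplied by Theorem \ref{mt1}, which simultaneously writes $\rho$ as an orthogonal projection $P_{n-s}$ and flattens the $M_\varphi$-components of $\varphi$ to the identity, is the key tool that makes this comparison tractable.
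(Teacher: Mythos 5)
A preliminary remark: the paper itself gives no proof of Lemma \ref{l6} --- it states that the argument is identical to that of \cite[Lemma 3.9]{keshavarzi3} and omits it --- so your attempt can only be judged on its own terms, not against an in-paper argument.

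On those terms the interior case contains a genuine gap. Schwarz rigidity at a point $z_0\neq 0$ with $|\varphi(z_0)|=|z_0|$ does give that $\varphi$ maps the complex line $\mathbb{C}z_0\cap\mathbb{B}_n$ linearly and modulus-preservingly onto another complex line, but this equality holds \emph{at $z_0$} and does not transfer to the image point $\varphi(z_0)$, which in general lies on a different complex line where no Schwarz equality is known; so the ``iteration argument'' cannot get started, and the conclusion $\rho(z_0)=z_0$ does not follow. In fact it is false. Take $n=2$ and $\varphi(z_1,z_2)=(z_2,\tfrac12 z_1)$: this is a holomorphic self-map of $\mathbb{B}_2$ with $\varphi(0)=0$, and $\varphi_{2m}=2^{-m}I\rightarrow 0$ uniformly, so $\rho\equiv 0$ and $L(\rho,\eta)=\{z:|z|\geq \tanh\eta\}\neq\emptyset$ for every $\eta>0$. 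The point $z_0=(0,\tfrac12)$ lies in $L(\rho,\eta)$ for $\eta\leq\tanh^{-1}(1/2)$ and satisfies $|\varphi(z_0)|=|z_0|$, yet $\rho(z_0)=0\neq z_0$; moreover $\dfrac{1-|\varphi(z_0)|}{1-|z_0|}=1$ at this point of $L(\rho,\eta)$, so no reworking of your interior case can succeed for the statement as literally written --- a correct treatment must either pass to a suitable iterate of $\varphi$ or confine attention to points near $\partial\mathbb{B}_n$ (which is all that is actually used in Claim \ref{cl3}). The boundary case is also not yet a proof: ``iterating and using $\varphi_j\rightarrow\rho$'' presupposes convergence of the full sequence of iterates, which is not available here ($\rho$ is only a limit point of $\{\varphi_j\}$ when $\varphi$ fixes an interior point), and the asserted comparison between the Julia horoballs $E(k,z_0)$ and the Bergman ellipsoids of \eqref{e11} is announced rather than carried out. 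The decisive failure, however, is the interior case above.
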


\begin{proposition} \label{p4}
$\beta(z,w)\geq \dfrac{1}{2}|z-w|$, for all $z,w\in \mathbb{B}_n$.
\end{proposition}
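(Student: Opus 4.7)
The strategy is to establish the infinitesimal Euclidean lower bound $F_K(z,v)\geq \tfrac{1}{2}|v|$ at every $z\in\mathbb{B}_n$ and every $v\in\mathbb{C}^n$, and then to integrate it along arbitrary curves. Since the preliminaries identify $\beta$ with the Kobayashi distance $k_{\mathbb{B}_n}$, and the latter is the infimum of Kobayashi lengths $L_K(\gamma)$ over $C^1$-curves joining the two points, this infinitesimal bound transfers directly to the desired global estimate.

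For the infinitesimal step, fix $z\in\mathbb{B}_n$, $v\in\mathbb{C}^n\setminus\{0\}$ and suppose $f\in H(\mathbb{D},\mathbb{B}_n)$ and $C>0$ satisfy $f(0)=z$ and $f'(0)=v/C$; it suffices to show $C\geq |v|/2$. Choose the unit vector $u=f'(0)/|f'(0)|$, so that $\langle f'(0),u\rangle=|f'(0)|$, and define the scalar function
$$h(\lambda)=\frac{1}{2}\langle f(\lambda)-z,u\rangle.$$
By Cauchy--Schwarz and the triangle inequality, $|h(\lambda)|\leq \tfrac{1}{2}(|f(\lambda)|+|z|)<1$, so $h$ is a holomorphic self-map of $\mathbb{D}$ with $h(0)=0$. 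The classical Schwarz lemma then gives $|h'(0)|\leq 1$, i.e.\ $\tfrac{1}{2}|f'(0)|\leq 1$, whence $|v|/C=|f'(0)|\leq 2$. Taking the infimum over all admissible $(f,C)$ yields $F_K(z,v)\geq |v|/2$.

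Finally, for any $C^1$-curve $\gamma:[0,1]\to\mathbb{B}_n$ with $\gamma(0)=z$ and $\gamma(1)=w$,
$$L_K(\gamma)=\int_0^1 F_K(\gamma(t),\gamma'(t))\,dt\;\geq\;\frac{1}{2}\int_0^1|\gamma'(t)|\,dt\;\geq\;\frac{1}{2}|z-w|,$$
since the Euclidean length of $\gamma$ is at least the Euclidean distance between its endpoints. Passing to the infimum over such $\gamma$ and invoking $\beta=k_{\mathbb{B}_n}$ gives the claim. There is no real obstacle in the argument: the only delicate ingredient is reducing the vector-valued bound to the scalar Schwarz lemma via the linear functional $\langle\,\cdot\,,u\rangle$, and the choice of $u$ above handles this cleanly.
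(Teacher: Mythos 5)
Your proof is correct, but it follows a genuinely different route from the paper's. You work infinitesimally: the scalar function $h(\lambda)=\tfrac12\langle f(\lambda)-z,u\rangle$ with $u=f'(0)/|f'(0)|$ is indeed a holomorphic self-map of $\mathbb{D}$ fixing $0$ (it is holomorphic because the Hermitian pairing against the fixed vector $u$ is $\mathbb{C}$-linear in the first slot, and $|h|\le\tfrac12(|f|+|z|)<1$), so Schwarz gives $|f'(0)|\le 2$ and hence $F_K(z,v)\ge\tfrac12|v|$; integrating along curves and using the paper's definition of $k_\Omega$ as an infimum of Kobayashi lengths, together with the stated identification of $\beta$ with $k_{\mathbb{B}_n}$, yields the claim. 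The paper instead argues globally and without the infinitesimal metric: it sets $r=\beta(z,w)$, uses the explicit ellipsoid description \eqref{e11} of the Bergman ball $B(w,r)$ to place $z$ on that ellipsoid, and then bounds $|z-w|\le|z-w_R|+|w_R-w|<2R\le 2r$ via $R=\tanh r\le r$. Your argument is shorter and more flexible (the same Schwarz-lemma device gives $F_K\ge |v|/d$ on any domain of Euclidean diameter $d$, and avoids any computation with $w_R$ and $s$), at the cost of invoking the equivalence between the distance $\beta$ of \eqref{e0} and the integrated infinitesimal Kobayashi metric, which the paper's preliminaries do supply; the paper's computation is more elementary and self-contained, using only the ellipsoid formula. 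Both establish the stated inequality.
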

\begin{proof}
The case $z=w$ is clear. Let $z\neq w$. Then $\beta(z,w)=r>0$. Note from \eqref{e11} that $B(w,r)$ is the ellipsoid
$$\dfrac{|P_w(\zeta)-w_R|^2}{R^2 s^2}+\dfrac{|Q_w(\zeta)|^2}{R^2s}<1,$$
where
$$R=\tanh r=\dfrac{e^r-e^{-r}}{e^r+e^{-r}}<1,$$
 $w_R=\frac{1-R^2}{1-R^2|w|^2}w$ and $s=\frac{1-|w|^2}{1-R^2|w|^2}<1$.
Thus,
$$\dfrac{|P_w(z)-w_R|^2}{R^2 s^2}+\dfrac{|Q_w(z)|^2}{R^2 s}=1,$$
 Since $s<1$ and $Q_w(z)$ is orthogonal to $P_w(z)$ and $P_w(z)-w_R$, we obtain
\begin{align*}
|z-w_R|^2&=|P_w(z)-w_R|^2+|Q_w(z)|^2\\
&=R^2 s \Big(\dfrac{|P_w(z)-w_R|^2}{R^2 s}+\dfrac{|Q_w(z)|^2}{R^2 s}\Big)\\
&< R^2 s \Big(\dfrac{|P_w(z)-w_R|^2}{R^2 s^2}+\dfrac{|Q_w(z)|^2}{R^2 s}\Big)=R^2 s.
\end{align*}
From the mean value theorem, there is a $0\leq t\leq r$ so that:
$$R=\tanh r=r sech^2 t \leq r.$$
Note that the last inequality comes from $sech \ t=\frac{2}{e^t+e^{-t}}\leq 1$.

Combining the above estimates, we deduce that:
\begin{align*}
|z-w|&\leq |z-w_R|+|w_R-w|\\
&< R\sqrt{s}+R^2\Big( \dfrac{1-|w|^2}{1-R^2 |w|^2}\Big)\\
&< 2R\leq 2r=2\beta(z,w).
\end{align*}
The proof is complete.
\end{proof}

Now we proceed to the proof of (i)$\Rightarrow$ (iii).
 From \cite[Lemma 3.3]{keshavarzi3}, there is a positive integer $k$ so that $\varphi_{kj}\rightarrow \rho$ uniformly on the compact subsets of $\mathbb{B}_n$ and
\begin{equation} \label{e4}
\lim_{j\rightarrow\infty} M_j(C_\varphi )= \frac{1}{k}\sum_{i=0}^{k-1} C_{\rho\circ \varphi_i}.
\end{equation}
 for the strong operator topology.
 Let (iii) not hold.

 \begin{claim} \label{cl0}
There is an $\varepsilon>0$ so that $\|\varphi_{kj}-\rho\|_\infty\geq \varepsilon$ for all $j$.
 \end{claim}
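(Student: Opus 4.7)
The plan is to show that the sequence $a_j := \|\varphi_{kj}-\rho\|_\infty$ is monotonically non-increasing in $j$; once this is established, the claim will be immediate from the assumption that (iii) fails, applied to the specific integer $k$ fixed via \cite[Lemma 3.3]{keshavarzi3} in the preceding paragraph (where $\varphi_{kj}\to\rho$ uniformly on compact subsets).

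The first step is to record the composition identity $\rho\circ\varphi_k=\rho$. I would obtain this simply by passing to the limit in the recursion $\varphi_{k(j+1)}=\varphi_{kj}\circ\varphi_k$: since $\varphi_{kj}\to\rho$ pointwise on $\mathbb{B}_n$ (even uniformly on compacta), letting $j\to\infty$ yields $\rho\circ\varphi_k=\rho$.

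With this identity in hand, for any $z\in\mathbb{B}_n$ one writes
$$\varphi_{k(j+1)}(z)-\rho(z)=\varphi_{kj}(\varphi_k(z))-\rho(\varphi_k(z)),$$
and taking the supremum over $z\in\mathbb{B}_n$, together with the inclusion $\varphi_k(\mathbb{B}_n)\subseteq\mathbb{B}_n$, gives $a_{j+1}\leq a_j$. Thus $\{a_j\}$ is a non-increasing sequence of non-negative reals and converges to some $L\geq 0$.

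Finally, since (iii) is assumed to fail, in particular for the specific $k$ above we have $a_j\not\to 0$. This forces $L>0$, and by monotonicity $a_j\geq L$ for all $j$, so setting $\varepsilon:=L$ finishes the proof. The sole substantive point is the monotonicity of $\{a_j\}$, whose engine is the identity $\rho\circ\varphi_k=\rho$; everything else is elementary, so I do not anticipate a real obstacle here.
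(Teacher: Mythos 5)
Your proposal is correct and is essentially the paper's own argument: both rest on the identity $\rho\circ\varphi_{kl}=\rho$ together with the factorization $\varphi_{km}=\varphi_{kj}\circ\varphi_{k(m-j)}$ to show that $j\mapsto\|\varphi_{kj}-\rho\|_\infty$ is non-increasing, and then invoke the failure of (iii) for the specific $k$ fixed beforehand. The paper phrases this as propagating the bound $\varepsilon$ from a subsequence back to every index rather than as monotonicity plus a positive limit, but the content is identical.
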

 \begin{proof}
Since (iii) does not hold, there is a sequence $m_j$ in $\mathbb{N}$ such that $\|\varphi_{km_j}-\rho\|_\infty\geq \varepsilon$ for all $j$. Consider an arbitrary positive integer $j$. Then, there is a $j_0$ so that $m_{j_0}\geq j$. Thus, from the fact that $\rho\circ \varphi_{kl}=\rho$ for all $l\in \mathbb{N}$, we have:
 \begin{align*}
 \varepsilon &\leq \|\varphi_{km_{j_0}}-\rho\|_\infty\\
 &=\|\varphi_{kj}\circ \varphi_{k(m_{j_0}-j)}-\rho\circ \varphi_{k(m_{j_0}-j)}\|_\infty\\
&=\sup_{z\in \mathbb{B}_n}|(\varphi_{kj}-\rho)( \varphi_{k(m_{j_0}-j)}(z))|\\
& \leq \|\varphi_{kj}-\rho\|_\infty.
 \end{align*}
The proof is complete.
 \end{proof}

\begin{claim} \label{cl1}
 For every $0<r<1$, we can find $a\in \mathbb{B}_n$ and $m\in \mathbb{N}$ such that:
$$|\varphi_{2km}(a)-\rho(a)|\geq \varepsilon, \ \ and \ \ |\varphi_{km}(a)|>r.$$
\end{claim}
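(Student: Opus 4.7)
The plan is to exploit the identity $\rho\circ\varphi_{km}=\rho$, which yields
$$\varphi_{2km}(a)-\rho(a)=\varphi_{km}(\varphi_{km}(a))-\rho(\varphi_{km}(a)).$$
Writing $b=\varphi_{km}(a)$, the claim reduces to producing, for each $0<r<1$, some $a\in\mathbb{B}_n$ and $m\in\mathbb{N}$ with $|b|>r$ and $|\varphi_{km}(b)-\rho(b)|\geq \varepsilon$. In other words, one must show that some point $b$ at which $\varphi_{km}$ stays a distance $\geq\varepsilon$ from $\rho$ can be realized in the image of $\varphi_{km}$ itself and arbitrarily close to the boundary.

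I would argue by contradiction. Fix $r$ and assume no such pair exists; then for every $a$ and every $m$,
$$|\varphi_{km}(a)|>r \ \Rightarrow\ |\varphi_{2km}(a)-\rho(a)|<\varepsilon.$$
By Claim \ref{cl0}, $\|\varphi_{2km}-\rho\|_\infty\geq \varepsilon$, so for each $m$ one may select $a_m\in\mathbb{B}_n$ with $|\varphi_{2km}(a_m)-\rho(a_m)|>\varepsilon$. (The harmless slack between strict and non-strict inequality in Claim \ref{cl0} is absorbed by replacing the $\varepsilon$ there by a slightly smaller positive number; equivalently, one can take $\varepsilon$ strictly below the $\limsup_j\|\varphi_{kj}-\rho\|_\infty$.) The contrapositive of the implication above then forces $|\varphi_{km}(a_m)|\leq r$, so $b_m:=\varphi_{km}(a_m)$ lies in the compact set $\overline{B(0,r)}\subset \mathbb{B}_n$.

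Applying the opening identity to $a_m$ now gives
$$\varepsilon<|\varphi_{2km}(a_m)-\rho(a_m)|=|\varphi_{km}(b_m)-\rho(b_m)|\leq \sup_{z\in\overline{B(0,r)}} |\varphi_{km}(z)-\rho(z)|.$$
The key input — and the source of the contradiction — is that $\varphi_{km}\to\rho$ uniformly on the compact subsets of $\mathbb{B}_n$, recorded just after equation \eqref{e4}. This forces the right-hand side to tend to $0$ as $m\to\infty$, which is incompatible with the lower bound $\varepsilon$.

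The main conceptual step is the initial reformulation via $b=\varphi_{km}(a)$: once one recognizes that $|\varphi_{2km}(a)-\rho(a)|$ equals $|\varphi_{km}-\rho|$ evaluated at $\varphi_{km}(a)$, the claim reads as the intuitive assertion that some ``bad'' point of $\varphi_{km}-\rho$ must escape every fixed compact subset of $\mathbb{B}_n$, which is precisely what compact-open convergence $\varphi_{km}\to\rho$ guarantees. I do not expect any significant further obstacles; the other ingredients (Claim \ref{cl0} and the retraction identity) have already been assembled in the preceding text.
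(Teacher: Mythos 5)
Your argument is correct and is essentially the paper's own proof: both proceed by contradiction, use $\rho\circ\varphi_{km}=\rho$ to rewrite $\varphi_{2km}-\rho$ as $(\varphi_{km}-\rho)\circ\varphi_{km}$, and invoke the uniform convergence $\varphi_{km}\to\rho$ on the compact set $\{|z|\leq r\}$ to contradict Claim \ref{cl0}; you phrase this by selecting witness points $a_m$ while the paper splits $\|\varphi_{2kj_0}-\rho\|_\infty$ into suprema over the two regions $|\varphi_{kj_0}(z)|>r$ and $|\varphi_{kj_0}(z)|\leq r$, which is only a cosmetic difference. Your remark about the strict-versus-nonstrict $\varepsilon$ slack is well taken (the paper's own contradiction between ``$\leq\varepsilon$'' and ``$\geq\varepsilon$'' has the same cosmetic gap), and your fix of choosing $\varepsilon$ strictly below $\lim_j\|\varphi_{kj}-\rho\|_\infty$ handles it cleanly.
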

\begin{proof}
If the claim do not hold, then there is an $0<r<1$ so that
\begin{equation} \label{e13}
\sup\{|\varphi_{2kj}(z)-\rho(z)|; \ z\in \mathbb{B}_n, \ |\varphi_{kj}(z)|>r\}\leq\varepsilon.
\end{equation}
for all $j\in \mathbb{N}$. On the other hand, there is a $j_0$ so that
\begin{equation} \label{e14}
\sup\{|\varphi_{kj_0}(z)-\rho(z)|; \ z\in \mathbb{B}_n, \ |z|\leq r\}\leq\varepsilon.
\end{equation}
We have:
\begin{align*}
\|\varphi_{2kj_0}-\rho\|_\infty&=\max \Big{\{} \sup\{|\varphi_{2kj}(z)-\rho(z)|; \ z\in \mathbb{B}_n, \ |\varphi_{kj}(z)|>r\},\\
&\qquad \qquad \sup\{|\varphi_{2kj}(z)-\rho(z)|; \ z\in \mathbb{B}_n, \ |\varphi_{kj}(z)|\leq r\}\Big{\}}.
\end{align*}
From \eqref{e13}, the first supremum is less than or equal to $\varepsilon$. For the second one, from the fact $\rho=\rho\circ \varphi_{kj_0}$ and \eqref{e14}, we have
\begin{align*}
&\sup\{|\varphi_{2kj_0}(z)-\rho(z)|; \ z\in \mathbb{B}_n, \ |\varphi_{kj_0}(z)|\leq r\}\\
&\qquad \qquad =\sup\{|\varphi_{kj_0}\circ \varphi_{kj_0}(z)-\rho\circ \varphi_{kj_0}(z)|; \ z\in \mathbb{B}_n, \ |\varphi_{kj}(z)|\leq r\}\\
&\qquad \qquad \leq\sup\{|\varphi_{kj_0}(z)-\rho(z)|; \ z\in \mathbb{B}_n, \ |z|\leq r\}\leq\varepsilon
\end{align*}
Therefore, $\|\varphi_{2kj_0}-\rho\|_\infty\leq \varepsilon$, which contradicts Claim \eqref{cl0}.
 \end{proof}

 \begin{claim} \label{cl3}
 There are two sequences $\{m_j\}\subseteq \mathbb{N}$ and $\{a_j\}\subset\mathbb{B}_n$ and some $f$ in $H^\infty(\mathbb{B}_n)$ such that $|\varphi_{2km_j}(a_j)-\rho(a_j)|\geq\varepsilon$ for all $j$, and
$$f\circ \rho\equiv 0, \ \ f(\varphi_l(a_j))=|\varphi_{2km_j}(a_j)-\rho(a_j)|^2,\qquad  \ 1\leq l\leq km_j, \ \forall j\in \mathbb{N}.$$
\end{claim}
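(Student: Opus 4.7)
The plan is to construct $\{a_j\}, \{m_j\}$ via Claim~\ref{cl1} and then produce $f$ by an interpolation argument in the $V$-coordinates of Theorem~\ref{mt1}.

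First I apply Claim~\ref{cl1} iteratively with a sequence of radii $r_j\uparrow 1$ chosen fast enough (the exact rate dictated by the interpolation step below) to obtain $a_j, m_j$ with $|\varphi_{2km_j}(a_j)-\rho(a_j)|\geq\varepsilon$ and $|\varphi_{km_j}(a_j)|>r_j$. Set $c_j:=|\varphi_{2km_j}(a_j)-\rho(a_j)|^2$. Since $\rho\circ\varphi_{2km_j}=\rho$, Proposition~\ref{p4} gives $\beta(\varphi_{2km_j}(a_j),\rho(\varphi_{2km_j}(a_j)))\geq\varepsilon/2$. Because $\rho\circ\varphi=\varphi\circ\rho$, Schwarz--Pick forces $l\mapsto \beta(\varphi_l(a_j),\rho(\varphi_l(a_j)))$ to be non-increasing, so $\beta(\varphi_l(a_j),\rho(\varphi_l(a_j)))\geq\varepsilon/2$ for all $1\leq l\leq 2km_j$.

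Next I apply Theorem~\ref{mt1} to $\varphi^k$, which has convergent iterates with limit $\rho$. This yields an invertible matrix $V$ with $V^{-1}\rho V=P_{n-s}$, so $M_\varphi=V\cdot(\{w\in V^{-1}\mathbb{B}_n:w_1=\cdots=w_s=0\})$. Set $u_{l,j}:=P_s V^{-1}\varphi_l(a_j)\in\mathbb{C}^s$. I look for $f$ of the form $f(z)=h(P_s V^{-1}z)$, where $P_s$ is the projection of $\mathbb{C}^n$ onto its first $s$ coordinates and $h$ is a bounded holomorphic function on the bounded ellipsoid $D:=P_s V^{-1}\mathbb{B}_n\subset\mathbb{C}^s$, with $h(0)=0$ and $h(u_{l,j})=c_j$ for all $l,j$. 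Any such $f$ lies in $H^\infty(\mathbb{B}_n)$, and because $P_s V^{-1}\rho=P_s P_{n-s}V^{-1}=0$, one automatically has $f\circ\rho\equiv h(0)=0$ and $f(\varphi_l(a_j))=h(u_{l,j})=c_j$.

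Thus the claim reduces to building such an $h$. This is an $H^\infty$-interpolation problem on the bounded strongly pseudoconvex domain $D$ with uniformly bounded targets ($c_j\leq 4$), subject to the common-value constraint $h(u_{l,j})=c_j$ across each $j$-th orbit. The hard part is verifying that $\{u_{l,j}\}\cup\{0\}$ is an interpolating sequence in $H^\infty(D)$: separation of each $u_{l,j}$ from $0$ in the Kobayashi metric of the slice of $V^{-1}\mathbb{B}_n$ at fixed last $n-s$ coordinates follows from the Bergman bound of the previous paragraph together with the fact that Kobayashi distance of a submanifold dominates the ambient one; separation between orbits of distinct $j$'s is secured by a thinning argument exploiting the freedom in Claim~\ref{cl1} to take $r_j\uparrow 1$ arbitrarily fast and the boundary concentration provided by Lemma~\ref{l6}, which forces the orbit of $a_j$ into a thin layer near $\partial\mathbb{B}_n$ whose thickness is geometric in $m_j$. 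With the interpolating property in hand, a standard Carleson-type $H^\infty$-interpolation theorem for bounded strongly pseudoconvex domains delivers $h$.
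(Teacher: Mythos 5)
Your construction of $\{a_j\},\{m_j\}$ (iterating Claim~\ref{cl1} with $r_j\uparrow 1$, then using Proposition~\ref{p4} and the Schwarz--Pick monotonicity of $l\mapsto\beta(\varphi_l(a_j),\rho\circ\varphi_l(a_j))$) matches the paper. The reduction of $f$ to an interpolation problem on $D=P_sV^{-1}\mathbb{B}_n$, however, has a gap that I do not think can be repaired. You need $h\in H^\infty(D)$ with $h(0)=0$ and $h(u_{l,j})=c_j\geq\varepsilon^2$, so you must keep the points $u_{l,j}=P_sV^{-1}\varphi_l(a_j)$ away from $0$. The only lower bound available is the hyperbolic one, $\beta(\varphi_l(a_j),\rho\circ\varphi_l(a_j))\geq\varepsilon/2$, i.e.\ $k_{V^{-1}\mathbb{B}_n}(w,P_{n-s}w)\geq\varepsilon/2$ for $w=V^{-1}\varphi_l(a_j)$. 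But $P_s:V^{-1}\mathbb{B}_n\to D$ is a holomorphic map, so by \eqref{e5} it \emph{contracts} the Kobayashi distance: you only get $k_D(u_{l,j},0)\leq k_{V^{-1}\mathbb{B}_n}(w,P_{n-s}w)$, an upper bound. Your appeal to ``the Kobayashi distance of a submanifold dominates the ambient one'' compares $k_{S_c}$ (on the slice $S_c=\{P_{n-s}w=c\}$) with $k_{V^{-1}\mathbb{B}_n}$, which again gives $k_D\leq k_{P_s(S_c)}=k_{S_c}$ on the image --- the wrong direction. And the danger is real: every point of the $j$-th block satisfies $|\varphi_l(a_j)|>r_j\to 1$, so the orbits concentrate at $\partial\mathbb{B}_n$, where a hyperbolic displacement of size $\varepsilon/2$ transverse to $M_\varphi$ corresponds to a Euclidean displacement as small as $O(\sqrt{1-r_j^2})$. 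Thus the $u_{l,j}$ may accumulate at $0$, in which case no continuous $h$ can satisfy $h(0)=0$ and $h(u_{l,j})\geq\varepsilon^2$, and the ansatz $f=h\circ P_sV^{-1}$ fails outright. (For the same reason your ``standard Carleson-type interpolation'' step has no separation hypothesis to stand on.)

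The paper avoids this by never projecting. It interpolates directly at the orbit points $x_i\in\mathbb{B}_n$, whose separation is \emph{radial}: Lemma~\ref{l6} gives $1-|\varphi_{i}(a_j)|<a\,(1-|\varphi_{i+1}(a_j)|)$ along each (reversed) orbit, and Claim~\ref{cl1} lets one splice the blocks so that the whole sequence satisfies the geometric condition of \cite[Lemma 3.11]{keshavarzi3}; that lemma supplies peak-type functions $f_{l,j}$ with $f_{l,j}(\varphi_l(a_j))=1$, vanishing at the other nodes, and $\sum_{l,j}|f_{l,j}|\leq M$ on all of $\mathbb{B}_n$. The prescribed common values and the condition $f\circ\rho\equiv 0$ are then obtained not from the interpolation but from the explicit bounded multiplier $\langle\varphi_{2km_j-l}(z)-\rho\circ\varphi_{2km_j-l}(z),\,\varphi_{2km_j}(a_j)-\rho(a_j)\rangle$, which equals $|\varphi_{2km_j}(a_j)-\rho(a_j)|^2$ at $z=\varphi_l(a_j)$ (pushing forward to index $2km_j$, where the \emph{Euclidean} bound $\geq\varepsilon$ holds) and vanishes identically on $\rho(\mathbb{B}_n)$. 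This multiplier trick is the essential idea missing from your proposal.
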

\begin{proof}
 From Lemma \ref{l6}, there is a  constant $0<a<1$ such that if $\beta(z,\rho(z))\geq \varepsilon/2$, then
   \begin{equation} \label{e3}
   \dfrac{1-|z|}{1-|\varphi(z)|}< a.
   \end{equation}

   Let $a_1$ in $\mathbb{B}_n$ be such that $|\varphi_{2k}(a_1)-\rho(a_1)|\geq \varepsilon$. Then, from Proposition \eqref{p4}, the fact that $\rho\circ \varphi_{kl}=\rho$ and $\rho \circ \varphi_l=\varphi_l\circ \rho$ for al $l\in \mathbb{N}$, and inequality \eqref{e5}, we obtain:
\begin{align*}
\dfrac{\varepsilon}{2}\leq\beta(\varphi_{2k}(a_1),\rho(a_1))=\beta(\varphi_{2k}(a_1),\rho\circ \varphi_{2k}(a_1))\leq \beta(\varphi_i(a_1),\rho\circ \varphi_i(a_1)).
\end{align*}
for all $1\leq i\leq 2k$. Thus, from \ref{e3}, we have
$$\dfrac{1-|\varphi_i(a_1)|}{1-|\varphi_{i+1}(a_1)|}< a, \ \ 1\leq i\leq k-1.$$

Put $m_1=1$. Using Claim \ref{cl1}, we can find $a_2\in \mathbb{B}_n$ and $m_2\in \mathbb{N}$ such that $|\varphi_{km_2}(a_2)|$ is large enough so that
$$|\varphi_{2km_2}(a_2)-\rho(a_2)|\geq \varepsilon,$$
and
$$\dfrac{1-|\varphi_{km_2}(a_2)|}{1-|\varphi(a_1)|}<a.$$
Again,
\begin{align*}
\dfrac{\varepsilon}{2}\leq\beta(\varphi_{2km_2}(a_2),\rho(a_2))\leq \beta(\varphi_i(a_2),\rho\circ \varphi_i(a_2))
\end{align*}
for all $0\leq i\leq 2km_2$. Thus, from \ref{e3}, we obtain:
$$\dfrac{1-|\varphi_i(a_2)|}{1-|\varphi_{i+1}(a_2)|}<a, \ \ 1\leq i\leq km_2-1.$$
By repeating  this process we will construct the sequence
\begin{equation*}
\begin{matrix}
x_1=\varphi_{k}(a_1),& x_2=\varphi_{k-1}(a_1), & ..., & x_{km_1}=\varphi(a_1)\\
x_{km_1+1}=\varphi_{km_2}(a_2), & x_{km_1+2}=\varphi_{km_2-1}(a_2),& ..., & x_{k(m_2+m_1)}=\varphi(a_2) \\
x_{k(m_2+m_1)+1}=\varphi_{km_3}(a_3), & x_{k(m_2+m_1)+2}=\varphi_{km_3-1}(a_3), & ...,& x_{k(m_3+m_2+m_1)}=\varphi(a_3)\\
\vdots & \vdots & \vdots & \ddots
\end{matrix},
\end{equation*}
which satisfies condition (i) of \cite[Lemma 3.11]{keshavarzi3}. Thus, there are some $M>0$ and a sequence $\{f_{l,j}\}_{j,l=1}^{\infty,km_j} \subset H^\infty(\mathbb{B}_n)$ such that
\begin{itemize}
\item[(a)] $f_{l,j}(\varphi_l(a_j))=1$, and $f_{l,j}(\varphi_r(a_s))=0$ whenever $l\neq r$ or $j\neq s$.
\item[(b)] $\sum_{j=1}^{\infty}\sum_{l=1}^{km_j} |f_{l,j}(z)|\leq M$, for all $z\in \mathbb{B}_n$.
\end{itemize}
Define
\begin{equation*}
f(z)=\sum_{j=1}^{\infty}\sum_{l=1}^{km_j}  \langle \varphi_{2km_j-l}(z)-\rho\circ \varphi_{2km_j-l}(z),\varphi_{2km_j}(a_j)-\rho(a_j)\rangle f_{l,j}(z).
\end{equation*}
Hence, from the Lebesgue dominated convergence theorem, (a), (b), and the fact that $\rho\circ \varphi=\varphi\circ \rho$, we deduce that  $f\in H^\infty(\mathbb{B}_n)$, $f(\rho)=0$, and
$$f(\varphi_l(a_j))=|\varphi_{2km_j}(a_j)-\rho(a_j)|^2, \ \ 1\leq j<\infty, \ 1\leq l\leq km_j.$$
 The proof is complete.
 \end{proof}

Using Claim \ref{cl3}, we have:
\begin{eqnarray*}
\Big{\|}\dfrac{1}{m_j} \sum_{l=1}^{m_j} C_{\varphi_l} - \frac{1}{k}\sum_{i=0}^{k-1} C_{\rho\circ \varphi_i} \Big{\|} &\geq&
\dfrac{1}{\|f\|_\infty} \Big{\|}\dfrac{1}{m_j} \sum_{l=1}^{m_j} C_{\varphi_l} f-\frac{1}{k}\sum_{i=0}^{k-1} C_{\rho\circ \varphi_i}  f \Big{\|}_\infty\\
&\geq&\dfrac{1}{\|f\|_\infty} \Big|\dfrac{1}{m_j} \sum_{l=1}^{m_j}  f(\varphi_l(a_j))-\frac{1}{k}\sum_{i=0}^{k-1} f(\rho\circ \varphi_i(a_j)\Big|\\
&=& \dfrac{1}{\|f\|_\infty} . \dfrac{1}{m_j} \sum_{l=1}^{m_j} |\varphi_{2km_j}(a_j)-\rho(a_j)|^2 \geq \dfrac{\varepsilon^2}{\|f\|_\infty} .
\end{eqnarray*}
From the above estimate, we deduce that $\{M_{j}(C_\varphi )\}_{j=1}^\infty$ does not converge to $\frac{1}{k}\sum_{i=0}^{k-1} C_{\rho\circ \varphi_i}$ for the strong operator topology, which contradicts \ref{e4}. Thus, (iii) holds.

\section{Proof of Theorem \ref{mt3}}
First, we define the sequence of operators $T_{j}:H^\infty(\mathbb{D})\rightarrow H^\infty(\mathbb{D})$ as follows
$$T_{j}f(z):= f \circ \varphi_j^1 (z,0,...,0),$$
where $\varphi_j^1$ is the first component of $\varphi_j$. Note that if we consider $f\in H^\infty(\mathbb{D})$ as a function in $ H^\infty(\mathbb{B}_n)$, then $T_jf=C_{\varphi_j}f$. Thus, if $C_\varphi$ is mean ergodic on $H^\infty(\mathbb{B}_n)$, then
$$N_j(\varphi):=\frac{1}{j}\sum_{i=1}^j T_{i}:H^\infty(\mathbb{D})\rightarrow H^\infty(\mathbb{D}),$$
converges for the strong operator topology.

We give the proof in two steps. In the first step, we show that if $N_j(\varphi)$ is SOT-convergent, then it must converge in the norm operator. Then, in the second step, we prove that $N_j(\varphi)$ does not converge in the norm operator. Therefore, the proof will be complete.

\subsection*{Step 1}
From the ergodic theorem, $M_j(\varphi)$ converges to a projection $P$ so that $PC_\varphi=C_\varphi P=P$.
Since $N_j(\varphi)$ converges for the strong operator topology to $P\mid_{H^\infty(\mathbb{D})}$ and $H^\infty(\mathbb{D})$ is a GDP space, from \cite[Theorem 2]{Lotz} the spectral radius of $N_j(\varphi)-P$ converges to $0$ as $j\rightarrow\infty$.
That is, $I-N_j(\varphi)+P$ is invertible for a large enough $j$.

 Now, we show that $I-T_1+P$ is bounded below. If not, then there is a sequence of unit vectors $\{f_l\}$ in $H^\infty(\mathbb{B}_n)$ so that:
$$\|(I-T_1+P)f_l\|_\infty\rightarrow 0 \qquad as \ j\rightarrow\infty.$$
Since $P=PT_1=P^2$, we obtain
$$\|Pf_l\|_\infty=\|P(I-T_1+P)f_l\|_\infty\rightarrow 0 \qquad as \ j\rightarrow\infty.$$
Thus,
$$\|(I-T_1)f_l\|_\infty\rightarrow 0 \qquad as \ j\rightarrow\infty.$$
Therefore,
\begin{align*}
(I-N_j(\varphi)+P) f_l&= (I-M_j(\varphi)+P) f_l\\
&= \dfrac{1}{n} \sum_{i=1}^j (I-C_{\varphi_i})f_l+Pf_l\\
&= \dfrac{1}{n} \sum_{i=1}^j (I+C_\varphi+...+C_{\varphi_{i-1}})(I-C_{\varphi})f_l+Pf_l\\
&= \dfrac{1}{n} \sum_{i=1}^j (I+T_1+...+T_{i-1})(I-T_1)f_l+Pf_l\rightarrow 0,\\
\end{align*}
as $l\rightarrow\infty$. This contradicts the invertibility of  $I-N_j(\varphi)+P$.

Now, since $I-T_1+P$ is bounded below, there is a bounded operator $S$ on $H^\infty(\mathbb{D})$ so that $S(I-T_1+P)=I$. Therefore,
\begin{align*}
(N_j(\varphi)-P)&=S(I-T_1+P) (N_j(\varphi)-P)\\
&=S(I-C_\varphi+P) (M_j(\varphi)-P)\\
&=\dfrac{1}{j} S(C_\varphi-C_{\varphi_{j+1}})\rightarrow 0,
\end{align*}
as $j\rightarrow \infty$.
\subsection*{Step 2}
The proof of this step is similar to that of \cite[Theorem 3.6]{beltran} and also \cite[Theorem 3.14]{keshavarzi3}.

From \cite[Theorem 2.2.31]{abate}, there is a $z_0 \in \partial \mathbb{B}_n$ such that $\varphi_j\rightarrow z_0$ uniformly on the compact subsets of $\mathbb{B}_n$. By a unitary equivalent, we can let $z_0=e_1$. Thus, if $\varphi_j=(\varphi_j^1,...,\varphi_j^n)$, then $\varphi_j^1\rightarrow 1$ and $\varphi_j^i\rightarrow 0$ for $2\leq i\leq n$ uniformly on the compact subsets of $\mathbb{B}_n$ as $j\rightarrow\infty$.

Thus, if $N_j(\varphi)$ converges in operator norm, then $N_j(\varphi)\rightarrow K_{1}$ on
$$A(\mathbb{D})=H(\mathbb{D})\cap \{f:\overline{\mathbb{D}}\rightarrow \mathbb{C}, \ continuous\},$$
where $K_{1}(f)=f(1)$ on $A(\mathbb{D})$.
The remaining of the proof is similar to that of \cite[Theorem 3.6]{beltran}, by considering $g(z)=\frac{1+z}{2}\in A(\mathbb{B}_n)$. 

\vspace*{0.5cm}

\textbf{Acknowledgments}. This paper was supported by the Iran National Science Foundation: INSF [project number 4000186].

\vspace*{0.5cm}

 Hamzeh Keshavarzi

E-mail: Hamzehkeshavarzi67@gmail.com

Department of Mathematics, College of Sciences,
Shiraz University, Shiraz, Iran.

\vspace*{0.5cm}

Karim Hedayatian

E-mail:
 hedayati@shirazu.ac.ir

Department of Mathematics, College of Sciences,
Shiraz University, Shiraz, Iran.

\end{document}